\newtheorem{theorem}{Theorem}[section]
\newtheorem{conj}{Conjecture}
\newtheorem{claim}[theorem]{Claim}
\theoremstyle{definition}
\newtheorem*{defn-non}{Definition}
\definecolor{rosepink}{RGB}{255,102,204}
\definecolor{dateplum}{HTML}{993366}
\definecolor{darkdateplum}{RGB}{128,0,32}
\definecolor{lightdateplum}{RGB}{219,112,147}
\definecolor{darkred}{RGB}{139,0,0}
\definecolor{lightred}{RGB}{240,130,100}
\newtheorem{rmk}[theorem]{Remark}
\newlist{Case}{enumerate}{2}
\setlist[Case, 1]{%
    label           =   {\bfseries Case \arabic*.},
    labelindent=1em ,labelwidth=1.3cm, labelsep*=1em, leftmargin =!
}
\setlist[Case, 2]{%
    label           =   {\bfseries Subcase \arabic{Casei}.\arabic*.},
    labelindent=-1em ,labelwidth=1.3cm, labelsep*=1em, leftmargin =!
}
\newcommand{\Yemph}[1]{\textcolor{black}{\emph{#1}}}
\newenvironment{poc}{\begin{proof}[Proof of claim]}{\end{proof}}
\title{Bollob\'{a}s-Erd\H{o}s-Tuza conjecture for graphs with no induced $K_{s,t}$}
\author{Xinbu Cheng\thanks{Laboratory of Mathematics and Complex Systems, Ministry of Education, School of Mathematical Sciences, Beijing Normal University, Beijing, China. Emails: chengxinbu2006@sina.com.}
\and
Zixiang Xu\thanks{Extremal Combinatorics and Probability Group (ECOPRO), Institute for Basic Science (IBS), Daejeon, South Korea. Emails: zixiangxu@ibs.re.kr. Supported by IBS-R029-C4.}
}
\begin{document}
\date{}
\maketitle
\begin{abstract}
A widely open conjecture proposed by Bollob\'{a}s, Erd\H{o}s, and Tuza in the early 1990s states that for any $n$-vertex graph $G$, if the independence number $\alpha(G) = \Omega(n)$, then there is a subset $T \subseteq V(G)$ with $|T| = o(n)$ such that $T$ intersects all maximum independent sets of $G$. In this paper, we prove that this conjecture holds for graphs that do not contain an induced $K_{s,t}$ for fixed $t \ge s$. Our proof leverages the probabilistic method at an appropriate juncture.

\end{abstract}

\section{Introduction}
For a graph $G=(V,E)$, an \Yemph{independent set} is a set of vertices in $G$ such that no pair of vertices is adjacent. An independent set is a \emph{maximum independent set} if it has the largest possible size in the graph $G$, and this size is called the \Yemph{independence number} of $G$. Usually we write $\alpha(G)$ as the independence number of $G$. A subset $T\subseteq V(G)$ is a \Yemph{hitting set} of the graph $G$ if $T\cap I\neq\emptyset$ holds for each maximum independent set $I\subseteq V(G)$. Define $h(G)$ to be the smallest size of a hitting set $T\subseteq V(G)$. In this paper, we always assume that $n$ is sufficiently large whenever necessary. Bollob\'{a}s, Erd\H{o}s and Tuza~(see,~\cite{1999BookErdos,1991ErdosCollection}) proposed the following conjecture.
\begin{conj}[\cite{1999BookErdos,1991ErdosCollection}]\label{conj:BETConj}
    Let $G$ be an $n$-vertex graph with $\alpha(G)=\Omega(n)$, then $h(G)=o(n)$.
\end{conj}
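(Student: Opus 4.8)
The plan is to treat $h(G)$ as the transversal number of a covering problem and then to locate the difficulty precisely. First I would record the reformulation that a set $T\subseteq V(G)$ is a hitting set if and only if $\alpha(G-T)<\alpha(G)$; equivalently, $T$ is a transversal (vertex cover) of the hypergraph $\mathcal{H}$ with vertex set $V(G)$ whose edges are exactly the maximum independent sets of $G$. Writing $\alpha:=\alpha(G)\ge cn$, where $c>0$ is the constant implicit in $\alpha(G)=\Omega(n)$, the \emph{fractional} transversal number of $\mathcal{H}$ is trivially $O(1)$: assigning weight $1/\alpha$ to each vertex gives $\sum_{v\in I}1/\alpha=|I|/\alpha=1$ for every maximum independent set $I$, so the uniform weighting is a feasible fractional cover of total weight $n/\alpha\le 1/c$. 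Hence the entire content of the conjecture lies in the integrality gap of this cover, namely rounding an $O(1)$ fractional transversal to an integral one of size $o(n)$.

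The second step is randomized rounding, which is where the probabilistic method enters. Let $N$ be the number of maximum independent sets, and form a random set $R$ by placing each vertex into $R$ independently with probability $p$. Because every maximum independent set $I$ has size at least $cn$,
\[
\Pr[R\cap I=\emptyset]=(1-p)^{|I|}\le e^{-pcn},
\]
so a union bound over the $N$ maximum independent sets shows that $R$ is a hitting set with positive probability once $p$ exceeds a suitable constant multiple of $(\log N)/n$. Since $\mathbb{E}|R|=pn$, this already yields $h(G)=O(\log N)$, and therefore the conjecture holds automatically whenever $\log N=o(n)$, that is, whenever the number of maximum independent sets is subexponential.

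The main obstacle is the complementary regime $\log N=\Omega(n)$, in which there are exponentially many maximum independent sets and the union bound above forces $p=\Omega(1)$ and hence only a useless linear-size transversal. This is exactly the crux that keeps the conjecture open: an exponentially large family of linear-size sets could a priori be arranged so that no sublinear set meets all of them, and the only way forward is to exploit that these sets are not arbitrary but are precisely the maximum independent sets of one graph, and therefore overlap heavily. Two routes I would pursue are (i) bounding a complexity parameter of $\mathcal{H}$, such as its VC dimension $d$, since the $\epsilon$-net theorem (itself a probabilistic-method statement) then converts the $O(1)$ fractional cover into a transversal of size $O(d)$, which is $o(n)$ as soon as $d=o(n)$; and (ii) a structural dichotomy that either extracts a small set of vertices lying in a positive fraction of all maximum independent sets, and then deletes them and iterates, or else certifies enough rigidity to drop $\alpha$ by removing few vertices (as already happens, for instance, for a perfect matching, where $N=2^{n/2}$ yet $h(G)=O(1)$). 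Supplying such structural control in full generality is the fundamental difficulty, and it is precisely this kind of control that a forbidden induced-$K_{s,t}$ hypothesis would make available.
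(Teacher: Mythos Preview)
The statement you are attempting is the Bollob\'as--Erd\H{o}s--Tuza conjecture itself, and the paper does \emph{not} prove it; it is stated as an open conjecture, and the paper establishes only the special case of induced $K_{s,t}$-free graphs (its Theorem~1.1). So there is no ``paper's own proof'' to compare against.

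Your proposal is not a proof either, and you essentially say so yourself. The first two steps---the fractional transversal of weight $n/\alpha\le 1/c$ and the randomized-rounding bound $h(G)=O(\log N)$ via a union bound---are correct and in fact reproduce exactly what the paper records in Section~3.2 as Theorem~3.2 (graphs with $2^{o(n)}$ maximum independent sets satisfy $h(G)=o(n)$), together with the same two arguments the paper sketches there (Alon's random-sampling proof and the $\varepsilon$-net/VC-dimension proof). But in the regime $\log N=\Omega(n)$ you offer only two \emph{possible routes}: bounding the VC dimension of the hypergraph of maximum independent sets, or an unspecified structural dichotomy. Neither is carried out, and there is no reason to expect the VC dimension to be $o(n)$ in general; indeed, you explicitly concede that ``supplying such structural control in full generality is the fundamental difficulty.'' That is precisely the gap: the conjecture is open because no one knows how to handle the exponential regime, and your proposal does not close it.

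In short, what you have written is a faithful summary of the known reductions and of why the problem is hard, matching the paper's own remarks, but it is not a proof of the conjecture.
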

According to Hajnal's observation~\cite{1965Hajnal}, this conjecture is trivial when $\alpha(G)>\frac{n}{2}$, but becomes challenging in general, even when $\alpha(G)=(\frac{1}{2}-\varepsilon)n$ for any $\varepsilon>0$. A feasible task is to investigate whether this conjecture holds for certain special classes of graphs. For example, Alon~\cite{2021AlonHittingSet,2023SIAM} demonstrated that every $n$-vertex regular graph $G$ with $\alpha(G)>(\frac{1}{4}+\varepsilon)n$ for any $\varepsilon>0$ satisfies $h(G)\le O_{\varepsilon}(\sqrt{n\log{n}})$. Recently, Cheng and Xu~\cite{2024ChengXu1} proved that this conjecture holds for several classes of graphs, including even-hole-free graphs, disk graphs, and circle graphs. We also recommend interested readers to explore some related results and approaches~\cite{2024HitInducedMathcing,2023P5Free}.

It has been proven in~\cite{2024ChengXu1} that for any $n$-vertex even-hole-free graph $G$ with linear-sized independence number, $h(G)\le \frac{5n}{\log{n}}$. Note that an even-hole-free graph does not contain any induced even cycles of length at least four. It is natural to ask whether we can strengthen the above result by only forbidding a single induced even cycle, such as an induced $4$-cycle. We answer this question for induced $C_{4}$-free graphs, albeit with a slightly weaker upper bound on $h(G)$ compared to the case of even-hole-free graphs. More generally, our main result shows that~\cref{conj:BETConj} holds for graphs without induced complete bipartite graph $K_{s,t}$ for any fixed $t\ge s$. 

\begin{theorem}\label{thm:Kst}
   For fixed positive integers $t\ge s$, let $G$ be an $n$-vertex induced $K_{s,t}$-free graph with $\alpha(G)=\Omega(n)$, then $h(G)=o(n)$.
\end{theorem}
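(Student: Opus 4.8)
\medskip
\noindent\textbf{Proof proposal.} The plan is to combine an elementary edge-counting argument with the K\H{o}v\'{a}ri--S\'{o}s--Tur\'{a}n sparsity forced by the forbidden induced $K_{s,t}$, isolating one hard case that is then handled by the probabilistic method.

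\emph{Setup and the trivial bound.} First I would delete all vertices lying in no maximum independent set; this changes neither $\alpha(G)$, nor $h(G)$, nor induced-$K_{s,t}$-freeness, so I may assume every vertex lies in a maximum independent set. Fix one such set $I$ with $|I|=\alpha(G)\ge \gamma n$ and write $U=V(G)\setminus I$. For any vertex $v$ the closed neighbourhood $N[v]$ meets every maximum independent set, since otherwise that set together with $v$ would be larger; hence $h(G)\le \delta(G)+1$, and I may assume $\delta(G)\ge cn$ for a constant $c>0$. Then every $x\in I$ has at least $cn$ neighbours, all in $U$, so $e(I,U)\ge cn\cdot\alpha(G)=\Omega(n^2)$.

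\emph{Structure from the forbidden induced $K_{s,t}$.} I would use only two structural facts. First, a Hall-type argument (using maximality of both sets) shows that every maximum independent set $J$ is obtained from $I$ by a swap along a perfect matching between $A_J:=I\setminus J$ and $B_J:=J\setminus I$, with every vertex of $B_J$ having all of its $I$-neighbours inside $A_J$. Second, for any $s$ pairwise non-adjacent vertices $u_1,\dots,u_s$ we have $|N(u_1)\cap\cdots\cap N(u_s)\cap I|\le t-1$, since otherwise these $u_i$ together with $t$ common neighbours inside $I$ would induce $K_{s,t}$. For $s=1$ this already finishes the proof: it forces $|N(u)\cap I|\le t-1$ for all $u\in U$, so $e(I,U)\le (t-1)|U|\le (t-1)n$, which together with $e(I,U)\ge cn\cdot\gamma n$ gives $h(G)=O_{t,\gamma}(1)$. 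For $s\ge 2$ one only controls $s$-fold common neighbourhoods; double counting the pairs of vertices of $U$ with a common neighbour in $I$ against the bound $t-1$ for non-adjacent pairs and against $\alpha(G)$ for adjacent pairs yields, with $k:=h(G)$, an estimate of the form $(k-1)^2 = O_{t,\gamma}(n)+O(e(G[U]))$. So $k=o(n)$ unless $G[U]$ is dense, and I may assume $e(G[U])=\Omega(n^2)$ henceforth.

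\emph{A clean case, then the obstacle.} If $\alpha(G[U])=o(|U|)$, then $|B_J|\le \alpha(G[U])=o(n)$ for every maximum independent set $J$, so $|A_J|=|B_J|=o(n)$; choosing any $T\subseteq I$ with $|T|=2\alpha(G[U])+1$ we have $|A_J|<|T|$, hence $T\not\subseteq A_J$ and $T\cap J\neq\emptyset$ for all $J$, so $T$ is a hitting set of size $o(n)$ and we are done. Thus I am left with the case where $G[U]$ is dense \emph{and} $\alpha(G[U])=\Omega(|U|)$. Here I would try to build the hitting set as $T=T_1\sqcup T_2$ with $T_1\subseteq I$ and $T_2\subseteq U$, each of size $o(n)$: a random sublinear $T_1\subseteq I$ should meet, by a union bound, all maximum independent sets $J$ with $|A_J|$ below a suitable threshold, and a random sublinear $T_2\subseteq U$ should block the swap sets $B_J$ of the remaining ones. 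This is the ``appropriate juncture'' for the probabilistic method, and also the main obstacle: since along any swap $B_J$ is an independent subset of $U$ and such sets are K\H{o}v\'{a}ri--S\'{o}s--Tur\'{a}n-sparse against $I$, one wants to show that the collection of swap patterns $B_J$ that can actually occur --- equivalently, the trace of the family of maximum independent sets on a well-chosen linear-sized set --- is only subexponential (a bound on its VC dimension would suffice but fails in general, since the family of maximum independent sets can itself be exponentially large, as disjoint triangles already show), and only then does a random $T_2$ of size $n^{1-\Omega(1)}$ succeed by a union bound. Making this count precise, while keeping $|T_1|+|T_2|=o(n)$, is where I expect the real work to lie; it is also the reason the conclusion is only $h(G)=o(n)$ rather than the $O(1)$ available for $s=1$.
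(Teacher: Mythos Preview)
Your proposal leaves the main case unproved. You reduce to $G[U]$ dense with $\alpha(G[U])=\Omega(|U|)$ and then propose to hit all maximum independent sets by random sublinear sets $T_1\subseteq I$, $T_2\subseteq U$ via a union bound over the swap sets $B_J$. But you never establish that the relevant family of $B_J$'s is subexponential, and induced-$K_{s,t}$-freeness gives no such bound: it controls $s$-wise common neighbourhoods, not the number of maximum independent sets or their traces. You yourself note that the family can be exponential and that ``making this count precise \ldots\ is where I expect the real work to lie''; so the argument is genuinely incomplete, not just missing a calculation. There is also an earlier issue: your double count of pairs in $U$ with a common neighbour in $I$ bounds $\sum_{v\in I}\binom{d_U(v)}{2}$ against $(t-1)\binom{|U|}{2}+\alpha(G)\,e(G[U])$ (and only for $s=2$), but $h(G)$ appears nowhere in it, so the asserted estimate $(k-1)^2=O_{t,\gamma}(n)+O(e(G[U]))$ with $k=h(G)$ is unjustified. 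In fact, since every $v\in I$ has $d_U(v)\ge\delta(G)\ge cn$, the left side is $\Omega(n^3)$ and the inequality forces $e(G[U])=\Omega(n^2)$ outright, so that split buys nothing.

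The paper's argument is quite different and never counts maximum independent sets. After fixing $I$, it buckets $U$ by $|N_I(v)|$ into ranges separated by large powers of $\log n$, and by pigeonhole finds one bucket $S_j$ of size below $\tfrac{\delta n}{2}$. It then samples a random $I_j\subseteq I$ of polylogarithmic size $k_j$ and sets $K$ to be the union, over all $s$-subsets of $I_j$, of their common neighbourhoods; by induced-$K_{s,t}$-freeness each such common neighbourhood has independence number at most $t-1$, so every maximum independent set meets $K$ in at most $(t-1)\binom{k_j}{s}$ vertices. Now pick $H\subseteq I$ of size $(t-1)\binom{k_j}{s}+1$ minimising the number of edges into $V(G)\setminus(I\cup K\cup S_j)$. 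If a maximum independent set $J$ avoided $H\cup N(H)\cup S_j$, then $H\cup(J\setminus K)$ would be independent of size exceeding $\alpha(G)$; hence $H\cup N(H)\cup S_j$ is a hitting set. The probabilistic step is only the choice of $I_j$, tuned so that each vertex of $U$ with many $I$-neighbours almost surely lands in $K$, which keeps $|N(H)|$ small. The forbidden $K_{s,t}$ is used once, to bound the independence number of each $s$-wise common neighbourhood, not to limit how many maximum independent sets there are.
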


\section{Proof of~\cref{thm:Kst}}
Let $G$ be an $n$-vertex induced $K_{s,t}$-free graph with $\alpha(G)=cn$ for some $0<c\le\frac{1}{2}$. Let $\delta>0$ be an arbitrary small number. It suffices to show that $h(G)<\delta n$ holds for any sufficiently large $n$. For a subset $W\subseteq V(G)$ and a vertex $v\in V(G)$, we use $N_{W}(v)$ to denote the neighborhood of $v$ in $W$ and sometimes we also use $N(v)$ if the subscript is clear. For subsets $A,B\subseteq V(G)$, we define $E(A,B)$ as the set of edges with one endpoint in $A$ and the other in $B$.

Since for every vertex $v\in V(G)$, $\{v\}\cup N_{G}(v)$ is a hitting set, we can assume that the minimum degree of $G$ is at least $\delta n-1$. We then arbitrarily pick a maximum independent set $I$. Firstly, we focus on the set of vertices in $V(G)\setminus I$ whose number of neighbors falls within a specific range, more precisely, for $1\le j\le\frac{2}{\delta}$, let
\begin{equation*}
    S_{j}:=\{v\in V(G)\setminus I : n(\log n)^{-(10s)^ {2j+1}}\le|N_{I}(v)|<n(\log n)^{-(10s)^ {2j-1}} \}.
\end{equation*}
Since $|V(G)\setminus I|=(1-c)n$, by pigeonhole principle, there exists some $1\le j\le\frac{2}{\delta}$ such that $|S_{j}|\le \frac{(1-c)\delta n}{2}<\frac{\delta n}{2}$. Fix this $j$.

We pick $k_{j}:=(\log{n})^{(10s)^{2j}}$ vertices in $I$ uniformly at random and let $I_{j}=\{u_{1},u_{2},\ldots,u_{k_{j}}\}\subseteq I$ be the subset consisting of those chosen vertices. For each $s$-tuple $(p_{1},\ldots,p_{s})\in\binom{[k_{j}]}{s}$, let $K^{(p_{1},\ldots,p_{s})}$ be the subgraph induced by the common neighborhood of $u_{p_{1}},\ldots,u_{p_{s}}$. Note that $G$ is induced $K_{s,t}$-free, therefore, for each $s$-tuple $(p_{1},\ldots,p_{s})\in\binom{[k_{j}]}{s}$, we have $\alpha(K^{(p_{1},\ldots,p_{s})})\le t-1$. Here we remark that the vertices in distinct $K^{(p_{1},\ldots,p_{s})}$ might be overlap, even identical. Let $K\subseteq V(G)$ be a subset consisting all vertices in $K^{(p_{1},\ldots,p_{s})}$ for all $s$-tuples $(p_{1},\ldots,p_{s})\in\binom{[k_{j}]}{s}$.

We then consider the edges between $I$ and $V(G)\setminus (I\cup K\cup S_{j})$, that is, the edge set $E(I,V(G)\setminus (I\cup K\cup S_{j}))$. Notice that this edge set depends on $K$, which in turn depends on the random choice of $I_{j}$. For convenience, let $e:=| E(I,V(G)\setminus (I\cup K\cup S_{j}))|$ be a random variable.  

Note that for a vertex $v\in I$, the average number of $|N_{V(G)\setminus(I\cup K\cup S_{j})}(v)|$ is $\frac{e}{cn}$. Therefore, there exists a subset $H\subseteq I$ of size $|H|= (t-1)\binom{k_{j}}{s}+1$ such that $|E(H,V(G)\setminus(I\cup K\cup S_{j})|\le \frac{\big((t-1)\binom{k_{j}}{s}+1\big)e}{cn}$. Fix this subset $H$ and let 
\begin{equation*}
    N(H):=\bigcup\limits_{v\in H}N_{V(G)\setminus(I\cup K\cup S_{j})}(v).
\end{equation*}

\begin{claim}\label{claim:Hitting}
    $H\cup N(H)\cup S_{j}$ hits all maximum independent sets of $G$.
\end{claim}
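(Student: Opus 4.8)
The plan is to argue by contradiction. Suppose some maximum independent set $J$ of $G$ satisfies $J\cap\big(H\cup N(H)\cup S_{j}\big)=\emptyset$; from $J$ I will manufacture an independent set of size $\alpha(G)+1$, which is absurd. The candidate is the ``swap'' set
\[
    J' \;:=\; (J\setminus K)\cup H .
\]

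The first step is to verify that $J'$ is independent. Both $J\setminus K\subseteq J$ and $H\subseteq I$ are independent, so the only thing to rule out is an edge $vw$ with $v\in H$ and $w\in J\setminus K$. Here every part of the construction is used: since $v\in I$ and $I$ is independent, $w\notin I$; since $w\in J$ and $J$ is disjoint from $S_{j}$ and from $N(H)$, we have $w\notin S_{j}$ and $w\notin N(H)$; and $w\notin K$ by the choice of $w$. Hence $w\in V(G)\setminus(I\cup K\cup S_{j})$ is adjacent to $v\in H$, so $w\in N_{V(G)\setminus(I\cup K\cup S_{j})}(v)\subseteq N(H)$ --- contradicting $w\notin N(H)$. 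Thus no such edge exists and $J'$ is independent.

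The second step is a lower bound on $|J'|$. Since $H\subseteq I$ and $I$ is independent while $K$ is a union of common neighbourhoods of vertices of $I$, we have $H\cap K=\emptyset$; and $H\cap J=\emptyset$ by assumption, so $H$ and $J\setminus K$ are disjoint and $|J'|=|J|-|J\cap K|+|H|$. Now $J\cap K$ is an independent set contained in $K=\bigcup_{(p_{1},\ldots,p_{s})\in\binom{[k_{j}]}{s}}K^{(p_{1},\ldots,p_{s})}$, and for each $s$-tuple the subgraph $K^{(p_{1},\ldots,p_{s})}$ has independence number at most $t-1$ by induced $K_{s,t}$-freeness (the fact recorded just before the claim); as there are $\binom{k_{j}}{s}$ such tuples, $|J\cap K|\le (t-1)\binom{k_{j}}{s}$. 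Since $|H|=(t-1)\binom{k_{j}}{s}+1$ and $|J|=\alpha(G)$, this gives $|J'|\ge \alpha(G)+1$, the desired contradiction.

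The argument is essentially bookkeeping once the swap set is written down; the step I would be most careful about is the case analysis ruling out an $H$--$(J\setminus K)$ edge, since that is exactly where all three ``forbidden'' sets $K$, $N(H)$, and $S_{j}$ are simultaneously exploited. Note also that the size $|H|=(t-1)\binom{k_{j}}{s}+1$ is chosen precisely so that deleting $J\cap K$ (a loss of at most $(t-1)\binom{k_{j}}{s}$) is more than compensated by inserting $H$.
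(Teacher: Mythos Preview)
Your proof is correct and follows the same approach as the paper: assume a maximum independent set $J$ avoids $H\cup N(H)\cup S_{j}$, form $(J\setminus K)\cup H$, and derive the contradiction $|(J\setminus K)\cup H|\ge\alpha(G)+1$ using $\alpha(K^{(p_{1},\ldots,p_{s})})\le t-1$ and $|H|=(t-1)\binom{k_{j}}{s}+1$. Your write-up is in fact more careful than the paper's, which asserts independence of $H\cup(I'\setminus K)$ in one line from $I'\cap N(H)=\emptyset$ without spelling out why every potential $H$--$(I'\setminus K)$ edge is ruled out.
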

\begin{poc}
     Suppose there is a maximum independent set $I'\subseteq V(G)$ such that $I'\cap (H\cup N(H)\cup S_{j})=\emptyset$. Note that $|I'\setminus K|\ge cn-(t-1)\binom{k_{j}}{s}$ since a maximum independent set can intersect each $K^{(p_{1},\ldots,p_{s})}$ with at most $t-1$ vertices. Also note that $I'\cap N(H)=\emptyset$, therefore $H\cup (I'\setminus K)$ is an independent set of size $cn-(t-1)\binom{k_{j}}{s}+(t-1)\binom{k_{j}}{s}+1>cn$, a contradiction to that $I'$ is a maximum independent set.
\end{poc}
By choices of $H$ and $S_{j}$, we can see that $|H\cup N(H)\cup S_{j}|<(t-1)\binom{k_{j}}{s}+1+ 
\frac{\big((t-1)\binom{k_{j}}{s}+1\big)e}{cn} +\frac{\delta n}{2}$. By~\cref{claim:Hitting}, if $(t-1)\binom{k_{j}}{s}+1+ 
\frac{\big((t-1)\binom{k_{j}}{s}+1\big)e}{cn} +\frac{\delta n}{2}<\delta n$, that is, if $e<\frac{\delta cn^{2}}{2(t-1)\binom{k_{j}}{s}+2}-cn$, then we are done.

It suffices to provide a desired upper bound on $\mathbb{E}[e]$, which is the expected number of edges in $E(I,V(G)\setminus(I\cup K\cup S_{j}))$. To achieve this, we partition the vertices in $V(G)\setminus (I\cup K\cup S_{j})$ into two groups based on $|N_{I}(v)|$. More precisely, let
\begin{equation*}
    A_{L}:=\{v\in V(G)\setminus(I\cup K\cup S_{j}): |N_{I}(v)|\ge n(\log{n})^{-(10s)^{2j-1}}\},
\end{equation*}
and
\begin{equation*}
    A_{S}:=\{v\in V(G)\setminus(I\cup K\cup S_{j}): |N_{I}(v)|< n(\log{n})^{-(10s)^{2j+1}}\}.
\end{equation*}
Since $|V(G)\setminus(I\cup K\cup S_{j})|\le (1-c) n$, the expected number of edges in $E(I,A_{S})$ is less than
\begin{equation*}
    n(\log{n})^{-(10s)^{2j+1}}\cdot (1-c) n  <  \frac{1}{10}\cdot\bigg(\frac{\delta cn^{2}}{2(t-1)\binom{k_{j}}{s}+2}-cn\bigg).
\end{equation*}
It remains to estimate the expected number of edges in $E(I,A_{L})$. The key observation is that, for a vertex $v\in V(G)\setminus I$ such that $|N_{I}(v)|\ge n(\log{n})^{-(10s)^{2j-1}}$, the probability that $v\in A_{L}$ is equal to the probability that $I_{j}$ contains at most $s-1$ vertices in $N_{I}(v)$. Therefore, the expected number of edges in $E(I,A_{L})$ is at most 
\begin{equation*}
\begin{split}
& \sum\limits_{\substack{v\in V(G)\setminus I:\\|N_{I}(v)|\ge n(\log{n})^{-(10s)^{2j-1}}}}\sum\limits_{x=0}^{s-1}\mathbb{P}\bigg[|I_{j}\cap N_{I}(v)|=x\bigg]\cdot |N_{I}(v)| \\
&\le \sum\limits_{\substack{v\in V(G)\setminus I:\\|N_{I}(v)|\ge n(\log{n})^{-(10s)^{2j-1}}}}\sum\limits_{x=0}^{s-1}\binom{k_{j}}{x}\bigg(1-\frac{|N_{I}(v)|}{cn}\bigg)^{k_{j}-x}\bigg(\frac{|N_{I}(v)|}{cn}\bigg)^{x}\cdot cn \\
&\le c(1-c)n^{2}\cdot 
\sum\limits_{x=0}^{s-1}\binom{k_{j}}{x}\bigg(1-\frac{n(\log{n})^{-(10s)^{2j-1}}}{cn}\bigg)^{k_{j}-x}\bigg(\frac{cn}{cn}\bigg)^{x}\\
&\le c(1-c)n^{2}\cdot 
\sum\limits_{x=0}^{s-1}k_{j}^{x}\bigg(1-\frac{(\log{n})^{-(10s)^{2j-1}}}{c}\bigg)^{k_{j}-x}\\
&\le \frac{1}{10} \cdot\bigg(\frac{\delta cn^{2}}{2(t-1)\binom{k_{j}}{s}+2}-cn\bigg).
\end{split}
\end{equation*}
By the linearity of expectation, the expected number of $e$ is smaller than $\frac{\delta cn^{2}}{2(t-1)\binom{k_{j}}{s}+2}-cn$, which implies the existence of a choice of $I_{j}$ such that $|H\cup N(H)\cup S_{j}|<\delta n$. The proof is then completed by~\cref{claim:Hitting}.

\section{Some remarks}
\subsection{More on the proof}
We originally thought that the following result would be helpful for our proof, as it was proved by Gy\'{a}rf\'{a}s, Hubenko, and Solymosi~\cite{2002Solymosi} and more recently by Holmsen~\cite{2020Holmsen} via a different approach. We also recommend interested readers to refer to~\cite{2017Mindegree,2021ECJNoK2t,2018CPCInducedTuran}. Let $\omega(G)$ denote the size of the largest complete subgraph in a graph, generally referred to as the \emph{clique number} of the graph $G$.
 
\begin{theorem}[\cite{2002Solymosi,2020Holmsen}]\label{thm:CliqueC4Free}
Let $G$ be an $n$-vertex induced $C_{4}$-free graph with $e(G)\ge\alpha\binom{n}{2}$ for some $\alpha>0$, then there is some $\beta>0$ such that $\omega(G)\ge \beta n$.    
\end{theorem}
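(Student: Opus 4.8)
The plan is to work with the complementary reformulation of the hypothesis. Being induced $C_4$-free is the same as requiring that for every non-adjacent pair $a,b$ the set $N(a)\cap N(b)$ induces a clique in $G$ (two non-adjacent common neighbours of $a,b$ would close an induced $C_4$), which is in turn the same as saying that $\overline{G}$ has no induced $2K_2$ (because $\overline{C_4}=2K_2$). From the first form I would record the global inequality
\[
 p_3(G):=\#\{\text{induced copies of }P_3\text{ in }G\}=\sum_{\{a,b\}\notin E(G)}\bigl|N(a)\cap N(b)\bigr|\le\binom{n}{2}\,\omega(G),
\]
together with the identity $p_3(G)=\sum_{v\in V(G)}m_v$, where $m_v$ is the number of non-edges of $G$ lying inside $N(v)$. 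From the second form I would use the elementary fact that any graph $F$ with no induced $2K_2$ has an independent set of size at least $|V(F)|-2\sqrt{2e(F)}$: pick a maximum matching $M$ of $F$, note that its uncovered vertices form an independent set, and observe that any two edges of $M$ must be joined by a \emph{connector} edge of $F$ (otherwise they span an induced $2K_2$), where each edge of $F$ is a connector for at most one pair of matching edges; hence $e(F)\ge|M|+\binom{|M|}{2}\ge|M|^2/2$, i.e.\ $|M|\le\sqrt{2e(F)}$.

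With these two ingredients the argument is a short counting contradiction. Assume $\omega(G)<\beta n$ for a small constant $\beta=\beta(\alpha)>0$ to be fixed at the end. Since $e(G)\ge\alpha\binom n2$, a standard averaging produces linearly many vertices --- say at least $\tfrac{\alpha}{3}n$ of them --- with $d(v)\ge\tfrac{\alpha}{2}n$; call these \emph{heavy}. By the displayed bound, $\sum_v m_v\le\binom n2\omega(G)<\tfrac{\beta}{2}n^3$, so by Markov the number of vertices with $m_v\ge\gamma n^2$ is below $\tfrac{\beta}{2\gamma}n$. Choosing $\gamma$ of order $\alpha^2$ and then $\beta$ of order $\alpha^3$, small enough, forces $\tfrac{\beta}{2\gamma}n<\tfrac{\alpha}{3}n$, so some heavy vertex $v$ has $m_v<\gamma n^2$. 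Now apply the elementary fact to $F=\overline{G}[N(v)]$ (which again contains no induced $2K_2$): it has an independent set of size at least $d(v)-2\sqrt{2m_v}\ge\tfrac{\alpha}{2}n-2\sqrt{2\gamma}\,n$, i.e.\ $G$ has a clique of that size; with $\gamma$ chosen below $\alpha^2/128$ this exceeds $\tfrac{\alpha}{4}n>\beta n$, contradicting $\omega(G)<\beta n$. Hence $\omega(G)\ge\beta n$ with $\beta$ of order $\alpha^3$, and the precise value is irrelevant.

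The step that needs care --- and the reason a one-vertex argument cannot work --- is that an individual heavy vertex may genuinely have a dense, clique-poor graph inside its neighbourhood; it is only the global bound $p_3(G)\le\binom n2\omega(G)$ that rules this out for \emph{all} heavy vertices simultaneously, so one must calibrate the ``cleanliness'' threshold $\gamma$ against both the heavy-degree threshold and the target $\beta$ so that the two vertex counts really separate. (A degree-and-matching argument that ignores $p_3$ also closes, but only when $\alpha$ is close to $1$; alternatively the ``clean heavy vertex'' step could be replaced by the induced removal lemma for $P_3$ --- few induced $P_3$'s forces $G$ to be edit-close to a disjoint union of cliques, one part of which is then large --- but the matching lemma above avoids that machinery and keeps all constants explicit.) I expect the only real work to be this bookkeeping of constants rather than any missing idea.
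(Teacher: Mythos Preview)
Your argument is correct and is essentially the same as the paper's: both bound the total number of non-edges inside neighbourhoods via the fact that common neighbourhoods of non-adjacent pairs are cliques, then locate a vertex of large degree with few such non-edges, and finish with the identical maximal-matching lemma inside that neighbourhood. The only cosmetic difference is that the paper packages the ``find a good vertex'' step as a single first-moment computation (picking $x$ uniformly and balancing $\mathbb{E}[|N(x)|]$ against $\mathbb{E}[m_x]$ via one auxiliary random variable), which yields $\beta$ of order $\alpha^2$ rather than the $\alpha^3$ you obtain from two separate Markov steps; the underlying counting is the same identity $\sum_v m_v = p_3(G)$ you wrote down.
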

We aimed to systematically remove cliques of linear size from an induced $C_{4}$-free graph, with the goal of eventually reaching a contradiction. However, the quantitative result in~\cref{thm:CliqueC4Free} indicates that repeatedly finding cliques of appropriate sizes is not feasible. Therefore, in the actual proof of~\cref{thm:Kst}, we employ the probabilistic method. Nevertheless, we believe that such a result is relevant to our main topic, so we present it here along with a self-contained proof using dependent random choice~\cite{2011DRC}.

\begin{proof}[Proof of~\cref{thm:CliqueC4Free}]
Let $\beta:=\frac{\alpha^{2}}{128}$.
We call $uv$ a \emph{missing edge} if $uv\notin E(G)$. As $G$ is induced $C_{4}$-free, for any missing edge $uv$, $G[N(u)\cap N(v)]$ must form a clique. We can assume that for every missing edge $uv$, $|N(u)\cap N(v)|<\beta\cdot n$, otherwise we are done. Pick one vertex $x\in V(G)$ uniformly at random, let $U:=N(x)$. It is easy to see that $\mathbb{E}[|U|]=\sum\limits_{v\in V(G)}\frac{|N(v)|}{n}\ge\alpha (n-1)$.
Let $Y$ be the random variable counting the number of missing edges in $U$, then we can see the expected number $\mathbb{E}[Y]<\frac{\beta n}{n}(1-\alpha)\binom{n}{2}=\beta(1-\alpha)\binom{n}{2}$, where the first inequality is due to the probability that a missing edge belongs to $U$ is at most $\frac{\beta n}{n}$. Let $Z:=|U|-\frac{\alpha\cdot Y}{\beta (1-\alpha)n}-\frac{\alpha(n-1)}{2}$. By linearity of expectation, $\mathbb{E}[Z]\ge 0$ and therefore there is a choice of $x$ such that $|U|\ge\frac{\alpha(n-1)}{2}$ and $\frac{Y}{|U|}\le\frac{\beta(1-\alpha)n}{\alpha}$. 

Fix this vertex $x$, and take a maximal matching of missing edges in $U=N(x)$, namely, $\tau_{1},\tau_{2},\ldots,\tau_{s}$. Since $G$ is induced $C_{4}$-free, for every pair $\tau_{i}$ and $\tau_{j}$, there is at least one more missing edge sharing one vertex with $\tau_{i}$ and $\tau_{j}$, respectively, which implies that the number of missing edges in $U$ is at least $s+\binom{s}{2}$. One the other hand, we have $s+\binom{s}{2}\le |Y|\le \frac{\beta(1-\alpha)n}{\alpha}\cdot |U|$.
Also note that $\omega(G)\ge |U|-2s+1$, then the result follows by simple calculations.
\end{proof}

\begin{rmk}
    Determining the optimal constant dependence between $\alpha$ and $\beta$ in~\cref{thm:CliqueC4Free} is of interest. The current best-known result is $\beta=(1-\sqrt{1-\alpha})^{2}$, which was shown in~\cite{2020Holmsen}
\end{rmk}

\subsection{Graphs without other induced substructures}
In general, we can show that~\cref{conj:BETConj} holds for those graphs having linear independence number but the number of maximum independent sets is relatively small.
\begin{theorem}\label{thm:SparseInd}
    Let $G$ be an $n$-vertex graph with $\alpha(G)=\Omega(n)$. If the number of distinct maximum independent sets is $2^{o(n)}$, then $h(G)=o(n)$.
\end{theorem}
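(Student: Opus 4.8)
The plan is to mimic the structure of the proof of \cref{thm:Kst}, but to replace the role played there by ``$G$ is induced $K_{s,t}$-free'' with the hypothesis that there are only $2^{o(n)}$ maximum independent sets. Write $\alpha(G)=cn$ and fix a small $\delta>0$; as before, it suffices to produce a hitting set of size less than $\delta n$. The key point is the following averaging/counting observation. Fix one maximum independent set $I$, and suppose for contradiction that every subset $T\subseteq V(G)$ with $|T|<\delta n$ fails to be a hitting set; in particular, for every such $T$ there is a maximum independent set disjoint from $T$. I would like to find a small set $H\subseteq I$ together with its outside-neighborhood $N(H)$ such that $H\cup N(H)$ is forced to meet every maximum independent set, exactly as in \cref{claim:Hitting}: if $I'$ is a maximum independent set avoiding $H\cup N(H)$, then $H\cup I'$ is independent and strictly larger than $I'$, a contradiction --- provided $H$ is chosen so that $|N(H)|$ is small and $H$ has no neighbor in $I'$, which is automatic since $H\subseteq I$ and $I'$ avoids $N(H)$.

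So the whole game is to choose $H\subseteq I$ of size, say, $|H|=m$ (a slowly growing function of $n$, e.g. $m=\log n$ or $m=n^{o(1)}$) with $|N_{V(G)\setminus I}(H)|=o(n)$. Here is where sparsity enters. A random $m$-subset $H$ of $I$ has $\mathbb{E}\bigl[|N(H)|\bigr]\le\sum_{v\notin I}\mathbb{P}[v\in N(H)]$, and $v\in N(H)$ iff $H$ hits $N_I(v)$; for a vertex $v$ with $|N_I(v)|\le n/f(n)$ this probability is roughly $m|N_I(v)|/(cn)\le m/(c f(n))$, which is $o(1)$ as long as $m=o(f(n))$. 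The danger is the vertices $v$ with $|N_I(v)|$ large (a constant fraction of $n$): a single random element of $H$ already hits their neighborhoods with constant probability. But such high-degree vertices are precisely the ones that are rarely present in maximum independent sets, and this is where the $2^{o(n)}$ bound should be used: I would argue that the set $B$ of ``bad'' vertices --- those lying in at least one maximum independent set that avoids the high-degree structure, or more precisely a suitably defined small set capturing the intersection behaviour --- has size $o(n)$, by a union-bound/entropy argument over the $2^{o(n)}$ maximum independent sets, and then add all of $B$ to the hitting set for free. After discarding $B$, one bins the remaining outside vertices by $|N_I(v)|$ into dyadic (or $(\log n)$-power) ranges as in the proof of \cref{thm:Kst}, uses pigeonhole to find a negligible ``middle'' bin $S_j$ of size $<\delta n/2$ to throw away, and chooses $m$ inside the appropriate window so that both the surviving low-degree vertices and the surviving high-degree vertices contribute $o(n)$ to $\mathbb{E}[|N(H)|]$.

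Concretely, the steps in order are: (1) reduce to minimum degree $\ge\delta n$ and fix a maximum independent set $I$; (2) identify and set aside a set $B$ of $o(n)$ vertices using the $2^{o(n)}$ hypothesis, so that outside $B$ every vertex has bounded ``independent-set frequency'' or small $N_I$-degree in the relevant regime; (3) dyadically bin $V(G)\setminus(I\cup B)$ by $|N_I(v)|$, apply pigeonhole to delete a light middle bin $S_j$ of size $<\delta n/2$; (4) pick a random $H\subseteq I$ of the appropriate size $m=(\log n)^{\Theta(1)}$, bound $\mathbb{E}\bigl[|N(H)|\bigr]=o(n)$ by splitting the sum over high-degree and low-degree survivors as above, and fix a good $H$; (5) by the \cref{claim:Hitting}-style argument, $H\cup N(H)\cup S_j\cup B$ hits all maximum independent sets and has size $o(n)+o(n)+\delta n/2 + o(n)<\delta n$.

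The main obstacle is step (2): making precise which ``bad'' set $B$ one can afford to delete, and verifying that $|B|=o(n)$ follows from there being only $2^{o(n)}$ maximum independent sets. The natural attempt is to let $B$ be the union of all maximum independent sets that are ``too different'' from $I$, but that union could be all of $V(G)$; one instead wants something like the set of vertices $v$ for which the conditional structure $G[N(u_1)\cap\cdots]$ fails to have bounded independence number along a random sample, and to bound its size by a counting argument. I expect this to require a careful choice of the definition of $B$ and possibly an entropy-compression or container-style pruning, rather than a naive union bound; the remaining probabilistic estimate in step (4) is then essentially the same computation already carried out in the proof of \cref{thm:Kst}.
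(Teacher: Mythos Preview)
Your proposal has a genuine gap at step~(2): you never specify what the bad set $B$ is, nor do you prove $|B|=o(n)$, and you yourself flag this as ``the main obstacle.'' Without this step the rest of the argument collapses, because the vertices with large $|N_I(v)|$ are exactly the ones a random $H\subseteq I$ picks up as neighbours with constant probability, and nothing you have written controls how many such vertices there are. The hypothesis ``there are $2^{o(n)}$ maximum independent sets'' is a global counting statement; it does not obviously localise to a small exceptional vertex set of the kind your scheme needs, and the vague gesture toward ``entropy-compression or container-style pruning'' is not a proof. There is no evident reason why a vertex with high degree into $I$ should lie in few maximum independent sets, so the heuristic link you are relying on is not there.

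More to the point, you are working far too hard. The paper's argument bypasses all of the \cref{thm:Kst} machinery and is essentially one line: pick a uniformly random set $T$ of $p$ vertices. Any fixed maximum independent set has size $cn$, so the probability $T$ misses it is at most $(1-c)^{p}\le e^{-cp}$. A union bound over the $2^{o(n)}$ maximum independent sets shows that $T$ hits all of them with positive probability as soon as $p$ is $o(n)$ but grows faster than the exponent in $2^{o(n)}$; hence $h(G)=o(n)$. (The paper also notes an alternative route via the $\varepsilon$-net theorem, using that the fractional transversal number is $O(1)$ and the VC-dimension of the family of maximum independent sets is $o(n)$.) The $2^{o(n)}$ hypothesis is tailor-made for a direct union bound; there is no need to fix a reference independent set $I$, no degree binning, and no analogue of \cref{claim:Hitting}.
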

There are two simple approaches to showing~\cref{thm:SparseInd}. The first is the same as the proof in~\cite[Theorem~1.5]{2021AlonHittingSet}, one can randomly sample $p:=o(n)$ vertices and the probability that a maximum independent set is not hit is at most $(1-\frac{p}{n})^{cn}$. Using a union bound, then one can show with a positive probability, that the randomly chosen vertices hit all maximum independent sets. The second proof is similar to the proof in~\cite[Theorem~1.2]{2024HitInducedMathcing}. One can build a set system $\mathcal{F}$ consisting of all maximum independent sets in $G$. It then suffices to upper bound the transversal number of $\mathcal{F}$, which in turn is upper bounded by a function of its fractional transversal number and VC-dimension, due to the well-known $\varepsilon$-net theorem in~\cite{1987DCG}. Furthermore, the fractional transversal number is a constant due to $\alpha(G)=\Omega(n)$, and the VC-dimension is $o(n)$ since the number of maximum independent sets is $2^{o(n)}$. We omit the details and straightforward calculations of both proofs here.

In particular, It was shown in~\cite{1989M2free,farber1993upper} that the number of maximum independent sets in an $n$-vertex graph that does not contain an induced matching of size $t$ is at most $O(n^{2t-2})$. Therefore, for an $n$-vertex graph $G$ with no induced matching of size $t$, if $\alpha(G)=\Omega(n)$, then $h(G)=O(\log{n})$.

We are also interested in whether~\cref{conj:BETConj} holds for induced-$H$-free graphs when $H$ is not a complete bipartite graph. For instance, it will be interesting to generalize the result on $4$-cycle to longer even cycles.
\begin{conj}
    For a positive integer $k\ge 3$, let $G$ be an $n$-vertex induced $C_{2k}$-free graph with $\alpha(G)=\Omega(n)$, then $h(G)=o(n)$.
\end{conj}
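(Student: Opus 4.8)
\emph{Sketch of a possible approach.} The plan is to try to push the argument behind the proof of~\cref{thm:Kst} through. For $k=2$ there is nothing new: an induced $C_4$ is an induced $K_{2,2}$, so the statement is~\cref{thm:Kst} with $s=t=2$. Hence assume $k\ge 3$. Write $\alpha(G)=cn$; fixing $\delta>0$, it suffices to show $h(G)<\delta n$ for $n$ large, and exactly as in the proof of~\cref{thm:Kst} we may assume the minimum degree of $G$ is at least $\delta n-1$, since otherwise a closed neighborhood of size at most $\delta n$ already hits every maximum independent set. Fix a maximum independent set $I$ with $|I|=cn$, define ``middle-degree'' sets $S_j\subseteq V(G)\setminus I$ over a suitable family of degree windows so that pigeonhole yields one index $j$ with $|S_j|<\delta n/2$, and sample a random subset $I_j\subseteq I$ whose size is a suitable power of $\log n$. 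The goal is to find a deletion set $K\subseteq V(G)\setminus I$ with two properties: (i) $|I'\cap K|=o(n)$ --- ideally polylogarithmic --- for every maximum independent set $I'$, which is what lets the argument of~\cref{claim:Hitting} run; and (ii) with positive probability over the choice of $I_j$, only few edges survive between $I$ and $V(G)\setminus(I\cup K\cup S_j)$, so that a small $H\subseteq I$ with small neighborhood produces a hitting set $H\cup N(H)\cup S_j$ of size $<\delta n$.

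The genuinely new difficulty is the choice of $K$. In~\cref{thm:Kst} one takes $K$ to be the union of the common neighborhoods of all $s$-subsets of $I_j$, and property (i) is \emph{exactly} induced $K_{s,t}$-freeness. This collapses once $k\ge 3$: the complete bipartite graph $K_{\lfloor n/2\rfloor,\lceil n/2\rceil}$ is induced $C_{2k}$-free for every $k\ge 3$ (each induced subgraph is again complete bipartite, and $C_{2k}$ is complete bipartite only for $k=2$), so the common neighborhood of any bounded number of independent vertices can be an independent set of linear size --- deleting it is unaffordable, and indeed in that example it would leave $H\subseteq I$ unable to meet the opposite side, although $h(K_{\lfloor n/2\rfloor,\lceil n/2\rceil})=2$. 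One therefore needs a different $K$, justified by a different structural consequence of induced $C_{2k}$-freeness. The natural candidate is a set of vertices lying on short induced ``alternating'' paths anchored at $I_j$: induced $C_{2k}$-freeness says precisely that no induced path on $2k-1$ vertices with endpoints $a,b$ admits an external vertex adjacent to exactly $\{a,b\}$ and to none of its $2k-3$ interior vertices. One would try to engineer $I_j$ and $K$ so that a high-degree vertex $v$ that \emph{survives} --- not deleted, still adjacent to $I$ --- together with an induced $(2k-1)$-vertex path built from vertices of $I$ and of the already-deleted set $K$, closes up to an induced $C_{2k}$, a contradiction that bounds the surviving edge count in expectation.

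One cannot instead reduce to~\cref{thm:SparseInd}: an induced $C_{2k}$-free graph may have $2^{\Omega(n)}$ maximum independent sets (a disjoint union of $n/3$ triangles is even-hole-free and has $3^{n/3}$ of them), so structural information beyond counting maximum independent sets is unavoidable. I expect the main obstacle to be exactly the step producing the induced $(2k-1)$-path with the prescribed adjacencies to $v$: assembling an induced $C_{2k}$ requires $2k-2$ ``connector'' vertices that are pairwise non-adjacent and each adjacent to precisely its two prescribed vertices of $I$, and the mere density of the bipartite graph between $I$ and $V(G)\setminus I$ does not supply such a configuration --- $K_{\lfloor n/2\rfloor,\lceil n/2\rceil}$ shows that any argument using only edge counts is doomed, the escape there being that one side is a linear independent set which ought to have been absorbed into $I$ or removed earlier. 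Resolving this seems to require new input: either (a) a Ramsey- or regularity-type statement to the effect that a sufficiently generic dense structure attached to a linear-sized independent set, once ``degenerate'' pieces such as complete bipartite blocks are stripped away, must contain an induced $C_{2k}$; or (b) a decomposition theorem for induced $C_{2k}$-free graphs in the spirit of the even-hole-free treatment of~\cite{2024ChengXu1}, fed into an induction on $k$ with~\cref{thm:Kst} as the base case. Route (b) looks the more promising, but producing a decomposition that remains useful in the $\alpha(G)=\Omega(n)$ regime is the crux; for the first open case $k=3$ one might also attempt a more hands-on analysis of induced $C_6$-free graphs directly.
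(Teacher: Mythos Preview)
There is nothing to compare here: in the paper this statement is a \emph{conjecture}, not a theorem, and the paper gives no proof. It is presented precisely as an open problem motivating future work; the only related remark is that, by Brandt's result, a \emph{maximal triangle-free} graph with no induced $C_6$ has every independent set contained in a vertex neighborhood, so such graphs have at most $n$ maximum independent sets and hence $h(G)=O(\log n)$ via~\cref{thm:SparseInd}. That is a very special case, not a proof of the conjecture.

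Your write-up is candid about this --- it is explicitly a ``sketch of a possible approach'' that ends by naming the obstruction rather than overcoming it --- so it should not be read as a proof either. Your diagnosis of \emph{why} the proof of~\cref{thm:Kst} does not transfer is accurate: the step that fails is the bound $\alpha(K^{(p_1,\dots,p_s)})\le t-1$, which in the $K_{s,t}$-free setting is exactly what forces $|I'\cap K|$ to be small. Your $K_{\lfloor n/2\rfloor,\lceil n/2\rceil}$ example correctly shows that for $k\ge 3$ the common neighborhood of finitely many vertices of $I$ can itself be a linear-size independent set, so property~(i) is unattainable for the same $K$. Your observation that~\cref{thm:SparseInd} cannot rescue the argument (a disjoint union of triangles is even-hole-free yet has $3^{n/3}$ maximum independent sets) is also correct and matches the paper's intent in stating the conjecture.

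In short: the paper contains no proof of this statement, your proposal does not claim to be one, and the obstacles you isolate are genuine. If you want to contribute beyond the paper, the concrete partial result to aim for first is probably the induced $C_6$-free case without the ``maximal triangle-free'' hypothesis, where Brandt's structural input might still be leveraged.
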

A result of Brandt~\cite{2002DAMK3C6} states that if $G$ is a maximal triangle-free graph without induced $6$-cycle, then every independent set of $G$ is contained in the neighborhood of a vertex. Therefore, the number of maximum independent sets in such a graph is at most $n$, then one can take $O(\log{n})$ vertices to hit all maximum independent sets.

\bibliographystyle{abbrv}
\bibliography{BETC4}
\end{document}